\newtheorem{thm}{Theorem}
\newdefinition{rmk}{Remark}
\newdefinition{ex}{Example}
\newdefinition{de}{Definition}
\newproof{pf}{Proof}
\newcommand{\g}{\mathfrak{g}}
\newcommand{\otrzy}{\mathfrak{o}(3)}
\newcommand{\hatrzy}{\mathfrak{h}(3)}
\newcommand{\eseldwa}{\mathfrak{sl}(2,\mathbb{R})}
\begin{document}

\begin{frontmatter}

\title{On normality of f.pk-structures on $\mathfrak{g}$-manifolds}

\author[mymainaddress]{Andrzej Czarnecki}
\cortext[mycorrespondingauthor]{Corresponding author}
\ead{andrzejczarnecki01@gmail.com}

\author[mymainaddress]{Marcin Sroka}
 
\ead{marcin.sroka@student.uj.edu.pl}

\author[mymainaddress]{Robert Wolak}

\ead{robert.wolak@im.uj.edu.pl}

\address[mymainaddress]{Jagiellonian University, \L ojasiewicza 6, 30-348 Krakow, Poland}

\begin{abstract}
We consider higher dimensional generalisations of normal almost contact structures, the so called f.pk-structures where parallelism spans a Lie algebra $\mathfrak{g}$ (f.pk-$\mathfrak{g}$-structures).Two types of these structures are discussed. In the first case, we construct an almost complex structure on a product manifold mirroring $\mathcal{K}$-structures. We show that the natural normality condition can be satisfied only when $\mathfrak{g}$ is abelian. The second case we consider is when the Lie algebra in question is 3-dimensional, but the almost complex structure on a product is constructed in a different manner. In both cases the normality conditions are expressed in terms of the structure tensors.
\end{abstract}

\begin{keyword}
f.pk-structures, f-structures, normal almost contact structures, $\g$-manifolds, Lie algebras
\MSC[2010] 53C10, 53C12, 53C15
\end{keyword}

\end{frontmatter}

\section{Introduction}

K\"ahler manifolds and complex or almost complex manifolds form a very important and well established field of research. However, their even dimension proves an hindrance as numerous physical applications call for study of their odd-dimensional counterparts. Just like the contact topology complements symplectic topology, in the richer environment of differential geometry we study the Sasakian structures or K-contact structures on odd-dimensional manifold. In that case, we have a K\"ahler (or complex, or almost complex) structure in a direction transverse to the foliation given by the Reeb vector field, cf. \cite{sa,bl,bg}. Note however that these geometries are more rigid, even a non-vanishing Killing vector field on a compact Riemannian manifold defining a transversely K\"ahler foliation does not necessarily define a Sasakian structure, cf. \cite{bg, cap, ha, mo}. Many obstructions to the existence of such structures have been found, cf. \cite{bg} and references therein. The classical definition in \cite{bl} imposes a normality condition on the contact metric structure that may at first seem somewhat artificial.

The notion of normal almost contact structures is well known (detailed information can be found in \cite{bl}). We recall the main idea. Suppose we have an \emph{almost contact structure}: a smooth manifold $M$ together with structure tensors -- a 1-form $\eta$, a vector field $\xi$ and a tensor $\phi$ of type $(1,1)$, such that

$$
\phi^2=-I+\eta \otimes \xi, \qquad \eta(\xi)=1.
$$ 

We say that an almost contact structure $(M,\phi,\eta,\xi)$ is \emph{normal} if the almost complex structure defined on $M \times \mathbb{R}$ by

\begin{align*}
J(\xi ) &= \partial_t\\
J(\partial_t) &= -\xi\\
J(X) &= \phi (X) \qquad X \in D = im{}\phi
\end{align*}

or, equivalently, $J(X,f \frac{\partial}{\partial t})=(\phi(X)-f\xi,\eta(X) \frac{\partial}{\partial t})$, is integrable. It turns out, cf. \cite{sa,bl}, that being a normal almost contact manifold is equivalent to a simple condition, namely

$$
[\phi,\phi](X,Y)+d\eta(X,Y) \xi=0
$$

where $[\phi,\phi]$ is the Nijenhuis tensor (of type $(1,2)$) given by 

$$
[\phi,\phi](X,Y)=\phi^2[X,Y]+[\phi X,\phi Y]-\phi[\phi X,Y]-\phi[X, \phi Y].
$$

In the presence of an additional metric structure we have, for example, that a contact metric structure $(M, g, \phi , \xi , \eta )$ is Sasakian if and only if the corresponding structure on $M \times \mathbb{R}$ is conformally K\"ahler cf. \cite{bl}. We can find plenty more similar structures in differential geometry, e.g. $\mathcal{K}$-structures in \cite{dkw, dug} or 3-Sasakian structures in \cite{bg-3sas}. In these cases we have a foliation spanned by a finite dimensional Lie algebra of linearly independent vector fields, transverse structure of which is  K\"ahler or hyper-K\"ahler and we impose some additional condition of the \emph{normality type}. Geometry of such manifolds, called $\g$-\emph{manifolds}, was first studied by D. Alekseevsky et al. in \cite{A}. 

In this short paper we investigate the normality condition \'a la Sasaki for f.pk-$\g$-structures. Examples of such structures come from a locally free action of a Lie group preserving some transverse almost complex structure. We first show that the immediate generalisation of the almost complex structure appropriate for almost contact or $\mathcal{K}$-structure imposes severe restrictions on the Lie algebra $\g$. Then we turn to the 3-dimensional case and give a better suited almost complex structure and a satisfying condition for its integrability.

\section{Normal f.pk-$\mathfrak{g}$-structures}

Let $M$ be a smooth manifold of dimension $2n+p$. In analogy to almost contact structures presented in the introduction, we consider a \emph{f-structure} $\phi$ on $M$, that is a tensor of type $(1,1)$ such that

$$
\phi^3 + \phi = 0
$$

We will denote $im{}\phi$ by $D$. If $ker{}\phi$ is a $p$ dimensional parallelisable distribution and a parallelism $\xi_i^*$ is chosen, we say that $(M,\phi,\xi_i^*)$ is an \emph{f.pk-structure} i.e. a $Gl(n,\mathbb{C})\times I_p$ structure on $TM$. Throughout the paper we will assume that the parallelism spans a Lie algebra $\g$, which means that there exists a Lie algebras monomorphism 

$$
^*\,:\,\g \longrightarrow \chi(M)
$$

from a $p$-dimensional Lie algebra $\g$ into vector fields on $M$ such that image of $^*$ is $ker{}\phi$. Throughout the paper $\xi_i$ denotes a basis of $\g$ such that $^*(\xi_i)=\xi_i^*$ and we also write $\xi^*$ for $^*(\xi)$ of $\xi \in \g$ and $\g^*$ for $^*(\g)$. This somewhat cumbersome notation is meant to accommodate for that our main motivation and examples come from actions of Lie group on a manifold (and it is then customary to mark the transported invariant fields with $\ast$), however it is $\xi_i^*$ that are fixed and inherent to the structures we consider, not $^*$. We call $(M, \phi, \xi_i^*)$ an \emph{f.pk}-$\g$-\emph{structure}. We then have a decomposition $TM = ker{}\phi \oplus im{}\phi = span\{\xi_1^*,...,\xi_p^* \} \oplus D $ and we define one forms $\eta_i$ by $\eta_i\xi_j^*= \delta^i_j$ and $\left.\eta_i\right|_D=0$. We also define a 1-form on $M$ with values in $\g$ by

$$
\eta X= \sum_{i=1}^p\eta_i (X) \xi_i.
$$  

and $\eta^*$ by $\eta^*X=(\eta X)^*$. The condition $\phi^3+ \phi = 0$ can be now rewritten

$$
\phi^2=-I+\eta^*
$$

Suppose $G$ is a simply-connected Lie group with its Lie algebra $\g$. Of course (by a slight abuse of notation) $T(M \times G)=D \oplus  span\{ \xi_i^* \} \oplus  span\{ \xi_i \}$. We will write $\xi^*$, $X$ or $\xi$ for the vector fields in the respective sub-bundles, but we will use notation $(X,\xi)$ when -- and only when -- need arises to discern $TM$- and $TG$-part of a vector field on $M\times G$.

Let us consider an almost complex structure on $M \times G$ given by the formula 

$$
J(X,\xi)=\left(\phi X-\xi^*,\eta X \right)
$$ 

where $X \in \chi(M)$ and $\xi \in \chi(G)$. It is clear that it is an almost complex structure -- $J$ equals $\phi$ on $D$ and $\left.\phi\right|_{D}^2 =-I$ and on the remaining part of $T(M \times G)$ $J$ acts by

$$
\xi_i^* \mapsto \xi_i \mapsto -\xi_i^*.
$$

We say that $(M,\phi,\xi_i^*)$ is a \emph{normal} f.pk-$\g$-structure (by an abuse of notation we will sometimes say that $\phi$ is normal, yet we understand that a parallelism is fixed) if an almost complex structure $J$ is integrable. Observe that although the definition of $J$ depends on $^*$, the integrability does not. In a slightly different context it was proved in \cite{mil} that for $G= \mathbb{R}^p$, $J$ is integrable if and only if $(M,\phi,\xi_i^*)$ is an integrable f-contact manifold, yet no characterisation in terms of structure tensors was given. An abelian case was also studied in \cite{blf} (see also a rich reference list there) or as a special case in \cite{wais}.

By the celebrated theorem of Newlander-Nirenberg, it is enough to check whether

$$
[J,J] \equiv  0.
$$

Since the Nijenhuis tensor is an antisymmetric tensor it is enough to check that condition only on pairs of vector fields of three types: $(\xi_i^*,0), (X,0), (0,\xi_i)$. This gives 6 cases to consider.

\usetagform{roman}

\begin{align}
\begin{aligned}
\left[J,J \right] \left[(\xi_i^*,0),(\xi_j^*,0) \right]=&-\left([\xi_i^*,\xi_j^*],0 \right)+ \left([\phi \xi_i^*,\phi \xi_j^*],[\xi_i,\xi_j] \right)\\
&-J \left[(\phi \xi_i^*,\xi_i),(\xi_j^*,0) \right]-J \left[(\xi_i^*,0),(\phi \xi_j^*, \xi_j) \right]= \left(-[\xi_i^*,\xi_j^*],[\xi_i,\xi_j] \right)
\end{aligned}
\end{align}

We see already that the necessary condition for the integrability is that $G$ must be abelian -- and so in fact $G=\mathbb{R}^p$. We proceed keeping that in mind.

\begin{align}
\begin{aligned}
[J,J] \left[(\xi_i^*,0),(0,\xi_j) \right] &= \left[(\phi \xi_i^*,\xi_i),(-\xi_j^*,0) \right]-J \left(0,[\xi_i,\xi_j] \right)-J \left(-[\xi_i^*,\xi_j^*],0 \right)\\
&=-\left(-[\xi_i,\xi_j]^*,0 \right)+ \left(\phi [\xi_i^*,\xi_j^*], \eta [\xi_i^*,\xi_j^*] \right)=0
\end{aligned}
\end{align}

\begin{align}
\begin{aligned}
[J,J] \left[(0,\xi_i),(0,\xi_j) \right]= - \left(0,[\xi_i,\xi_j] \right)+ \left([\xi_i^*,\xi_j^*],0 \right)=0
\end{aligned}
\end{align}

\begin{align}
\begin{aligned}
[J,J] \left[(X,0),(Y,0) \right]&=-\left([X,Y],0 \right)+ \left[(\phi X,0),(\phi Y, 0) \right]-J \left([\phi X,Y],0 \right)- J \left([X,\phi Y],0 \right) \\
&=\left(-[X,Y],0 \right)+ \left([\phi X, \phi Y],0 \right)- \left(\phi [\phi X,Y], \eta [\phi X,Y] \right)- \left(\phi [X,\phi Y],\eta [X,\phi Y] \right) \\
&= \left([\phi,\phi][X,Y]- \eta^* [X,Y] ,- \eta \left([\phi X,Y]+[X,\phi Y] \right) \right)
\end{aligned}
\end{align}

\begin{align}
\begin{aligned}
[J,J] \left[(X,0),(0,\xi_i) \right]&= \left[(\phi X,0),(- \xi_i^*,0) \right]-J \left[(\phi X,0),(0,\xi_i) \right]-J \left[(X,0),(-\xi_i^*,0) \right] \\
&=\left(-[\phi X, \xi_i^*],0 \right)+J \left([X,\xi_i^*],0 \right)= \left(-[\phi X, \xi_i^*],0 \right)+ \left(\phi [X,\xi_i^*], \eta [X,\xi_i^*] \right) \\
&= \left(\phi [X,\xi_i^*]-[\phi X, \xi_i^*], \eta [X,\xi_i^*] \right)
\end{aligned}
\end{align}

\begin{align}
\begin{aligned}
[J,J] \left[(X,0),(\xi_i^*,0) \right] &=- \left([X,\xi_i^*],0 \right)+ \left[(\phi X,0),(\phi \xi_i^*,\xi_i) \right]-J \left[(\phi X,0),(\xi_i^*,0) \right]-J \left[(X,0),(\phi \xi_i^*,\xi_i) \right] \\
&=\left(-[X,\xi_i^*]+[\phi X,\phi \xi_i^*],0 \right)- \left(\phi [\phi X,\xi_i^*], \eta [\phi X,\xi_i^*] \right)- \left(\phi [X,\phi \xi_i^*], \eta [X,\phi \xi_i^*] \right) \\
&=\left([\phi,\phi](X,\xi_i^*)- \eta^* [X,\xi_i^*] ,- \eta \left([\phi X,\xi_i^*]+[X,\phi \xi_i^*] \right) \right)
\end{aligned}
\end{align}

It turns out (as it is in the case of almost contact structures, \cite{bl}) that there is much interdependence between these cases. To be precise, the following theorem gives the necessary and sufficient conditions for an f.pk-$\g$-structure to be normal.

\begin{thm} \label{normal}
An f.pk-$\g$-structure $(M, \phi, \xi_i^*)$ is normal if and only if 

$$
\begin{array}{lr}
[\phi,\phi]+d \eta^*=0 & (\ast)
\end{array}
$$

where $d \eta^*$ is an antisymmetric tensor of type $(1,2)$ given by

$$
d \eta^* (Z,T) = d \eta_i (Z,T) \xi_i^*= \left( Z \eta_i T - T \eta_i Z - \eta_i [Z,T] \right) \xi_i^*
$$

This condition implies that $\g$ is abelian.
\end{thm}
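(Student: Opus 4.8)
The plan is to read off normality from the six displayed computations of $[J,J]$ via Newlander--Nirenberg, and to match them against the three argument types of $(\ast)$ afforded by the splitting $TM = D\oplus\ker\phi$. Since $[J,J]$ and $N^{(1)}:=[\phi,\phi]+d\eta^*$ are genuine tensors, it suffices to test them on pairs drawn from $D$ and from $\{\xi_i^*\}$; in particular one may assume $X,Y\in D$ throughout (IV)--(VI), which is exactly the regime ($\eta X=\eta Y=0$) in which those formulas were derived. First I would evaluate $(\ast)$ on the three types of pairs. Using $\eta_i|_D=0$, $\eta_i(\xi_j^*)=\delta_{ij}$ and the defining formula for $d\eta^*$, the derivative terms in $d\eta^*$ drop out and $d\eta^*$ collapses to $-\eta^*[\cdot,\cdot]$ on $D\times D$ and on $D\times\ker\phi$; hence $(\ast)$ restricted to these pairs is literally the $TM$-component of (IV) and of (VI). On $\ker\phi\times\ker\phi$, inserting $\phi\xi_i^*=0$ and $\phi^2=-I+\eta^*$ gives $N^{(1)}(\xi_i^*,\xi_j^*) = -[\xi_i^*,\xi_j^*]$, so $(\ast)$ forces $[\xi_i^*,\xi_j^*]=0$; as $^*$ is a monomorphism this is $[\xi_i,\xi_j]=0$, proving the final assertion that $(\ast)$ makes $\g$ abelian (and, through (I)--(III), making those three cases vanish).

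The implication normality $\Rightarrow(\ast)$ is then immediate: integrability makes all six expressions vanish, so (I) gives $\g$ abelian, the $TM$-parts of (IV) and (VI) give $N^{(1)}=0$ on $D\times D$ and on $D\times\ker\phi$, and the abelian condition gives $N^{(1)}=0$ on $\ker\phi\times\ker\phi$; being a tensor that vanishes on a spanning set, $N^{(1)}$ vanishes identically.

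The reverse implication is the crux and the step I expect to be the main obstacle, since it requires showing that $N^{(1)}=0$ \emph{alone} forces the as-yet-unused $TG$-components of (IV),(V),(VI) together with the $TM$-component of (V) to vanish --- the higher-dimensional analogue of the interdependence of $N^{(1)},\dots,N^{(4)}$ in the almost contact case. The strategy is to feed well-chosen arguments into $N^{(1)}=0$ and apply the forms $\eta_k$. Evaluating $N^{(1)}(X,\xi_i^*)=0$ for $X\in D$ and using $\phi\xi_i^*=0$, $\phi^2=-I+\eta^*$ collapses it to $[X,\xi_i^*]=-\phi[\phi X,\xi_i^*]$; applying $\eta_k$ together with $\eta_k\circ\phi=0$ yields at once $\eta_k[X,\xi_i^*]=0$, which is the $TG$-part of (V), and the same identity with $\phi X\in D$ in place of $X$ kills the $TG$-part of (VI). Substituting the collapsed relation into the $TM$-part of (V) rewrites it as $-\eta^*[\phi X,\xi_i^*]$, which vanishes by the same computation. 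For the $TG$-part of (IV) I would compute $\eta_k\big(N^{(1)}(\phi X,Y)\big)$ for $X,Y\in D$: using $\phi^2X=-X$ on $D$ and $\eta_k\circ\phi=0$, the surviving terms assemble to $-\eta_k\big([\phi X,Y]+[X,\phi Y]\big)$, so $N^{(1)}=0$ forces this to vanish as well. With every component of (I)--(VI) accounted for, $J$ is integrable, completing the equivalence.
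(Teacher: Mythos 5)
Your proposal is correct and follows essentially the same route as the paper: necessity is read off from the $TM$-components of (I), (IV), (VI), and sufficiency is obtained by feeding the pairs $(X,\xi_i^*)$ and $(\phi X,Y)$ into $(\ast)$ and composing with $\phi$ and with the forms $\eta_k$ to kill the remaining components, exactly as in the paper's use of $\eta^*(d\eta^*)=d\eta^*$ and $\eta^*\circ\phi=0$. Your derivation of abelianness directly from $N^{(1)}(\xi_i^*,\xi_j^*)=-[\xi_i^*,\xi_j^*]$ and injectivity of $^*$ is a slightly cleaner phrasing of the same point, but not a different argument.
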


\begin{proof}
We will show that the condition $(\ast)$ is necessary. There are only three cases to consider.

We start with 

$$
[\phi,\phi](\xi_i^*,\xi_j^*)+d \eta^* (\xi_i^*,\xi_j^*)=\left(-[\xi_i^*,\xi_j^*],[\xi_i,\xi_j] \right)=0
$$

because of $(I)$. If $X,Y$ are vector fields with values in $D$, from $(IV)$ we have that

$$
0 = [\phi,\phi](X,Y) - \eta^* [X,Y] =[\phi,\phi](X,Y) + d 
\eta^* (X,Y)
$$

since $\eta_i X = \eta_i Y = 0$. From $(VI)$ we have

$$
0 = [\phi,\phi](X,\xi_i^*) - \eta^* [X, \xi_i^*] = [\phi,\phi](X,\xi_i^*) + d \eta^* (X,\xi_i^*)
$$

because $\eta_j X=0$ and $\eta_j \xi_i^* =const$ and we are done.

We now show that the condition $(\ast)$ is sufficient. We have already seen at the beginning of this proof that under $(\ast)$ the group $G$ is abelian and consequently $(I),(II)$ and $(III)$ vanish. Observe that $\eta^*(d \eta^*) = d \eta^*$ and so composing each side of $(\ast)$ with $\eta^*$ gives

$$
\eta^* \left([\phi v, \phi w] \right) + d \eta^* (v,w) = 0
$$

Taking $v=X$ and $w=\xi_i^*$ we get

$$
0=d \eta^* (X,\xi_i^*)=- \eta^* [X, \xi_i^*]
$$

and so the second coordinate in $(V)$ is 0. We also see that the following vanishes

$$
[\phi,\phi](X,\xi^*_i)= \phi^2 [X, \xi_i^*]+ [\phi X, \phi \xi_i^*]- \phi [X,\phi \xi^*_i]- \phi [\phi X, \xi_i^*]=\phi \left( \phi [X, \xi_i^*]-[\phi X, \xi_i^*] \right)
$$

and composing with $\phi$ leads us to

$$
0= -\phi[X,\xi_i^*]+[\phi X,\xi_i^*]+ \eta^* \phi[X,\xi_i^*]- \eta^* [\phi X,\xi_i^*]=-\phi[X,\xi_i^*]+[\phi X,\xi_i^*]
$$ 

since $\eta^* \phi = 0$ and as we have seen $\eta^* [\phi X, \xi_i^*]=0$. We have shown that also the first coordinate in $(V)$ is 0. Again starting from

$$
[\phi , \phi](\phi v, w)+ d \eta^* (\phi v, w)=0
$$ 

and taking $\eta^*$ of both sides we get

$$
\eta^* [\phi^2 v, \phi w] = - \eta^* [v, \phi w] + \eta^* [\eta^* v, \phi w] = \eta^* [\phi v, w]
$$

which shows the vanishing of the second coordinate in $(IV)$ and~$(VI)$. The first coordinate in $(IV)$ and $(VI)$ already vanished by our assumptions, so we are done.
   
\end{proof}

\section{On 3-dimensional f.pk-$\mathfrak{g}$-structures}

The obvious way of defining an almost complex structure on the manifold $M \times G$ together with the integrability condition forced the Lie algebra $\g$ to be abelian. In the following section we will therefore look for alternative options for defining an almost complex structure and describe the implications of the integrability condition in that case.

Throughout this section assume that $\g$ is a 3-dimensional Lie algebra of a simply connected Lie group $G$ and spanned by $\{\xi_1,\xi_2, \xi_3\}$. Recall that these are classified into 9 types by Bianchi. Let $(M,\phi, \xi_1^*, \xi_2^*, \xi_3^*)$ be an f.pk-$\g$-structure. Let $\eta_1$, $\eta_2$, $\eta_3$ be the 1-forms as in the previous section and $\lambda_1$, $\lambda_2$, $\lambda_3$ denote linear forms dual to left invariant fields $\xi_1$, $\xi_2$, and $\xi_3$, respectively. We try to construct an almost complex structure on $M \times G$ that stands a chance of being integrable. 

In the presence of an f.pk-$\g$-structure, the suggestive approach to take is to find an integrable (ie. the natural Nijenhuis bracket vanishes) complex structure $\mathcal{J}$ on $\g^* \oplus \g$. We know however we can not take the simplest route $\xi_1^* \mapsto \xi_1 $, $\xi^*_2 \mapsto \xi_2$, $\xi_3^* \mapsto \xi_3$, so instead consider the one given by

\begin{align*}
\mathcal{J} \xi_1&=\xi_2\\
\mathcal{J} \xi_1^*&=\xi_2^* \\
\mathcal{J} \xi_3^*&=\xi_3.
\end{align*}

As shown in \cite{MY}, only seven of the nine types of 3-dimensional Lie algebras $\g$ give a product algebra $\g\oplus\g$ that admit an integrable complex structure at all -- but each then admits a structure of this form. These algebras include $\eseldwa$, $\otrzy$ and the Heisenberg algebra $\hatrzy$ and for convenience they will be properly listed in Remark \ref{lista}. The following theorem gives a necessary and sufficient condition on the parallelism $\xi_1^*, \xi_2^*, \xi_3^*$ for $\mathcal{J}$ to be integrable.

\begin{thm} \label{dobre}
The almost complex structure $\mathcal{J}$ on $\g^* \oplus \g$ is integrable if and only if the adjoint endomorphism $[\xi^*_3,\cdot ]$ has either a real eigenvalue $\gamma$ to which $\xi^*_1, \xi^*_2$ are linearly independent eigenvectors, or a complex one, $\alpha+\beta i$ with conjugated eigenvectors $v, w,$ -- and then $\xi_1^*=av+bw$ for some $a,b \in \mathbb{R}$ and $\xi_2^* = bv-aw$ or $\xi_2^* = -bv+aw$.
\end{thm}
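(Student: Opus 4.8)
The plan is to compute the Nijenhuis tensor $[\J,\J]$ of $\J$ directly on the Lie algebra $\g^{*}\oplus\g$, read off the conditions forced by $[\J,\J]\equiv 0$, and then repackage them as a statement about the eigenstructure of $\mathrm{ad}_{\xi_3^{*}}=[\xi_3^{*},\cdot\,]$. I would open by exploiting the general identity $[\J,\J](\J U,V)=-\J\,[\J,\J](U,V)$, valid for any almost complex structure, together with the antisymmetry of $[\J,\J]$. Since $\J$ groups the six generators into the three ``complex lines'' $L_1=\mathrm{span}\{\xi_1,\xi_2\}$, $L_2=\mathrm{span}\{\xi_1^{*},\xi_2^{*}\}$ and $L_3=\mathrm{span}\{\xi_3^{*},\xi_3\}$, this identity shows that $[\J,\J]$ vanishes on a whole pair of lines as soon as it vanishes on one representative pair, and that it vanishes automatically within a single line. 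So only three evaluations survive, say $[\J,\J](\xi_1,\xi_1^{*})$, $[\J,\J](\xi_1,\xi_3^{*})$ and $[\J,\J](\xi_1^{*},\xi_3^{*})$.

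Next I would use that $\g^{*}\oplus\g$ is a direct sum of Lie algebras, so every bracket between a starred and an unstarred generator vanishes. This immediately kills $[\J,\J](\xi_1,\xi_1^{*})$, disposing of the pair $(L_1,L_2)$. For $(L_2,L_3)$ the same observation collapses $[\J,\J](\xi_1^{*},\xi_3^{*})$ to $-[\xi_1^{*},\xi_3^{*}]-\J[\xi_2^{*},\xi_3^{*}]$; expanding $[\xi_3^{*},\xi_1^{*}]$ and $[\xi_3^{*},\xi_2^{*}]$ in the basis $\{\xi_1^{*},\xi_2^{*},\xi_3^{*}\}$ and collecting coefficients, I expect the vanishing to split into two requirements at once: the plane $P=\mathrm{span}\{\xi_1^{*},\xi_2^{*}\}$ is $\mathrm{ad}_{\xi_3^{*}}$-invariant, and $\mathrm{ad}_{\xi_3^{*}}|_{P}$ commutes with the restricted complex structure $\xi_1^{*}\mapsto\xi_2^{*}\mapsto-\xi_1^{*}$; equivalently, in the basis $\{\xi_1^{*},\xi_2^{*}\}$ the restriction has the shape $\alpha I+\beta J_0$ with $J_0=\left(\begin{smallmatrix}0&-1\\1&0\end{smallmatrix}\right)$. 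Because $^{*}\colon\g\to\g^{*}$ is a Lie algebra isomorphism, the third evaluation $[\J,\J](\xi_1,\xi_3^{*})$ is governed by the identical structure constants and yields exactly the same condition, so nothing new is produced.

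Finally I would translate this single algebraic condition into the eigenvalue language of the statement. The real $2\times2$ matrices commuting with $J_0$ are precisely those of the form $\alpha I+\beta J_0$, with eigenvalues $\alpha\pm\beta i$. If $\beta=0$ the restriction is the scalar $\gamma=\alpha$, so $\xi_1^{*},\xi_2^{*}$ are linearly independent eigenvectors for the real eigenvalue $\gamma$; and conversely a scalar restriction commutes with $\J$ and keeps $P$ invariant automatically. If $\beta\neq0$ the eigenvalues $\alpha\pm\beta i$ are genuinely complex, and $P$ is forced to be the unique real $\mathrm{ad}_{\xi_3^{*}}$-invariant plane attached to this conjugate pair (note $\xi_3^{*}$ itself always supplies the eigenvalue $0$, so the remaining two eigenvalues form this pair). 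Choosing the real basis $v,w$ of $P$ that puts $\mathrm{ad}_{\xi_3^{*}}|_{P}$ in the canonical form $v\mapsto\alpha v+\beta w$, $w\mapsto-\beta v+\alpha w$ — so that $v\mp iw$ are the conjugate complex eigenvectors — the requirement $\J\,\mathrm{ad}_{\xi_3^{*}}=\mathrm{ad}_{\xi_3^{*}}\J$ on $P$ forces $\J=\pm J_0$ in this basis. Writing $\xi_1^{*}=av+bw$ then gives $\xi_2^{*}=\J\xi_1^{*}=-bv+aw$ or $\xi_2^{*}=bv-aw$ according to the sign, which is exactly the stated dichotomy.

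I expect this last translation to be the main obstacle. The delicate points are that invariance of $P$ is free in the real case but becomes a genuine constraint pinning down $P$ in the complex case, and that the ``conjugated eigenvectors $v,w$'' must be read as the real data bringing $\mathrm{ad}_{\xi_3^{*}}|_{P}$ to canonical form (equivalently, the real and imaginary parts of the complex eigenvectors), since a literal complex-conjugate pair cannot satisfy $\xi_1^{*}=av+bw$ with real $a,b$ unless $a=b$. Once the correct normalisation of $v,w$ is fixed, the two admissible positions of $\xi_2^{*}$ correspond precisely to the two choices $\J=\pm J_0$, completing the equivalence.
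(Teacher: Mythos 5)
Your proposal is correct, and its core coincides with the paper's: both arguments reduce integrability of $\J$ to the single commutation relation $\J[\xi_3^*,\xi^*]=[\xi_3^*,\J\xi^*]$, obtained by evaluating the Nijenhuis tensor on $\xi_3^*$ and $\xi^*\in span\{\xi_1^*,\xi_2^*\}$ and using that all brackets between $\g^*$ and $\g$ vanish (and both note that the mirror condition on $\g$ is automatic because $^*$ is a Lie algebra isomorphism, cf.\ Remark~\ref{redu}). Where you genuinely diverge is the linear-algebra endgame. The paper chooses a Jordan basis for $\mathrm{ad}_{\xi_3^*}$ and runs a case analysis on its eigenvalues, explicitly deriving contradictions in the two nonzero nilpotent Jordan types before treating the real and complex eigenvalue cases separately; you instead work in the fixed basis $\{\xi_1^*,\xi_2^*,\xi_3^*\}$, observe that the commutation relation is exactly the statement that $P=span\{\xi_1^*,\xi_2^*\}$ is $\mathrm{ad}_{\xi_3^*}$-invariant with $\mathrm{ad}_{\xi_3^*}|_P=\alpha I+\beta J_0$ (where $J_0$ is the matrix of $\xi_1^*\mapsto\xi_2^*\mapsto-\xi_1^*$), and then read off the eigenstructure from the classification of real $2\times 2$ matrices commuting with $J_0$. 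This buys you two things: the nilpotent cases the paper must exclude by hand are simply impossible for a matrix of the form $\alpha I+\beta J_0$, and the converse (which the paper dismisses as ``easily checked'') becomes immediate, since the theorem's eigenvalue condition visibly reconstructs that normal form. Your explicit justification that only three pairs of generators need checking, via $[\J,\J](\J U,V)=-\J\,[\J,\J](U,V)$, and your observation that the ``conjugated eigenvectors $v,w$'' must in effect be normalised as real and imaginary parts for $\xi_1^*=av+bw$ with real $a,b$ to be a real vector, both tighten points the paper leaves implicit or slightly loose.
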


\begin{proof} Suppose $\mathcal{J}$ is an integrable complex structure on $\g^* \oplus \g$. Nijenhuis tensor being zero for $\xi^*_3$ and any other $\xi^*$ in $\g^*$ gives

$$
-[\xi^*_3,\xi^*]+[\mathcal{J} \xi^*_3, \mathcal{J} \xi^*] - \mathcal{J}[\mathcal{J} \xi^*_3, \xi^*] - \mathcal{J}[\xi^*_3, \mathcal{J} \xi^*] = -[\xi^*_3,\xi^*] - \mathcal{J}[\xi^*_3, \mathcal{J} \xi^*]=0
$$

Taking $\mathcal{J}$ of both sides

$$
\mathcal{J} [\xi^*_3,\xi^*] = [\xi^*_3, \mathcal{J} \xi^*]
$$

We will exploit that in a moment, but we must exclude several degenerate cases first. Suppose that $[\xi^*_3,\cdot ]$ has only zero eigenvalue but $[\xi^*_3,\cdot ] \not \equiv 0$. In a Jordan basis $\{\xi_3^*,v,w\}$ (note we can always include $\xi^*_3$ into basis) the adjoint can then take three forms. If

$$
[\xi^*_3,\cdot ]=
\left[
\begin{array}{ccc}
0 & 0 & 0 \\
0 & 0 & 1 \\
0 & 0 & 0
\end{array}
\right]
$$

then $\mathcal{J} v = \mathcal{J}[\xi_3^*,w]=[\xi_3^*\mathcal{J}w]$ which gives $\mathcal{J} v \in span \{v \}$, a contradiction ($\mathcal{J}$ does not have real eigenvalues).
If a Jordan form is

$$
[\xi^*_3,\cdot ]=
\left[
\begin{array}{ccc}
0 & 1 & 0 \\
0 & 0 & 0 \\
0 & 0 & 0
\end{array}
\right] \text{ or }
[\xi^*_3,\cdot ]=
\left[
\begin{array}{ccc}
0 & 1 & 0 \\
0 & 0 & 1 \\
0 & 0 & 0
\end{array}
\right]
$$

we argue that $\mathcal{J} \xi_3^* = \mathcal{J} [\xi_3^*, v] = [\xi_3^*, \mathcal{J}v] \in \g^*$, a contradiction. Thus if $\mathcal{J}$ is integrable and $0$ is an eigenvalue of $[\xi^*_3,\cdot ]$ then $\xi_1^*, \xi_2^*$ are in its kernel.

Now assume that $\gamma$ is a non-zero real eigenvalue with eigenvector $v$. Then

$$
\gamma \mathcal{J} v = \mathcal{J} [\xi_3^*, v] = [\xi_3^*, \mathcal{J}v] \in \g
$$

and so $\mathcal{J}$ is another eigenvector for $\gamma$, necessary linearly independent of $v$, again because $\mathcal {J}$ does not posses real eigenvalues. We also see that $span\{v ,\mathcal{J} v \}$ is an $\mathcal{J}$-invariant subspace of $\g^*$ and consequently $span\{v ,\mathcal{J} v \} = span \{\xi_1^*, \xi ^*_2 \}$ which proves our assertion about $\xi_1^*$ and $\xi ^*_2$.

Finally assume that $[\xi_3^*, \cdot]$ has a complex eigenvalue $\alpha+\beta i$ with conjugated eigenvectors $v$ and $w$ -- then

$$
\alpha \mathcal {J} v +\beta  \mathcal{J} w =\mathcal{J}[\xi_3^*,v] = [\xi_3^*,\mathcal{J} v]
$$

and

$$
-\beta \mathcal {J} v +\alpha \mathcal{J} w =\mathcal{J}[\xi_3^*,w] = [\xi_3^*,\mathcal{J} w]
$$

We easily get $\mathcal{J} v, \mathcal{J} w \in \g^*$ and that they too are conjugated eigenvectors for $\alpha+\beta i$. Consequently, if $\mathcal{J} v = x v + y w$ then $\mathcal{J} w = -yv+xw$, but since $\mathcal{J}^2=-Id$, $\mathcal{J}w = \frac{-1-x^2}{y}v-xw $ and that gives $x=0$ and $y=\pm 1$. Since $span \{v, w\}$ is $\mathcal{J}$-invariant subspace of $\g$, again $span \{v,w\} = span \{\xi_1^*, \xi_2^* \}$. If $\xi_1^*$ is $av+bw$ then $\mathcal{J} \xi_1^* = \xi_2^* = -bv+aw$ or $\mathcal{J} \xi_1^* = \xi_2^* = bv-aw$ as we wanted.

The opposite implication is easily checked.
\end{proof}

\begin{rmk} \label{redu}
As a consequence of this proof, the almost complex structure $\mathcal{J}$ is integrable if and only if

$$
[\mathcal{J},\mathcal{J}][\xi^*_3,\xi^*_1]=[\mathcal{J},\mathcal{J}][\xi^*_3,\xi^*_2]=0
$$

The symmetric conditions on $\g$ are then automatically satisfied because inverse of $^*$ preserves the eigenvectors of adjoint endomorphisms.
\end{rmk}

\begin{rmk}{\cite{MY}}\label{lista}
A 3-dimensional Lie algebra $\g$ admits a basis $\xi_1, \xi_2, \xi_3$ such that the adjoint $[\xi_3,\cdot]$ has a real eigenvalue, (to which $\xi_1$ and $\xi_2$) are two linearly independent eigenvectors or a complex eigenvalue (to which they are conjugate complex eigenvectors) if and only if $\g$ is of one of seven following types, given by their multiplication tables

\begin{enumerate}
\item $[\xi_1,\xi_3]= 0$, $[\xi_2,\xi_3]= 0$, $[\xi_1,\xi_2]= 0$ 
\item $[\xi_1,\xi_3]= 0$, $[\xi_2,\xi_3]= 0$, $[\xi_1,\xi_2]= \xi_1$
\item $[\xi_1,\xi_3]= 0$, $[\xi_2,\xi_3]= 0$, $[\xi_1,\xi_2]= \xi_3$ 
\item $[\xi_1,\xi_3]= \xi_1$, $[\xi_2,\xi_3]= \xi_2$, $[\xi_1,\xi_2]= 0$ 
\item $[\xi_1,\xi_3]= \theta \xi_1-\xi_2$, $[\xi_2,\xi_3]= \xi_1+\theta \xi_2$, $[\xi_1,\xi_2]= 0$  for $\theta \neq 0$
\item $[\xi_1,\xi_3]=\xi_2$, $[\xi_2,\xi_3]= -\xi_1$, $[\xi_1,\xi_2]= \xi_3$
\item $[\xi_1,\xi_3]=-\xi_2 $, $[\xi_2,\xi_3]=\xi_1 $, $[\xi_1,\xi_2]= \xi_3$
\end{enumerate}

The given bases satisfy the condition.

\end{rmk}

We will now implement the structure $\mathcal{J}$ into our structure tensors of the f.pk-$\g$-manifold. Denote by $\hat{\phi}$ a $(1,1)$-tensor on $M$ given by

\begin{align*}
\left.\hat{\phi}\right|_{D}&=\phi\\
\hat{\phi}\xi_1^*&=\xi_2^*\\
\hat{\phi}\xi_2^*&=-\xi_1^*\\
\hat{\phi}\xi_3^*&=0
\end{align*}

and by $\psi$ a $(1,1)$ tensor on $G$ given by

\begin{align*}
\psi\xi_1&=\xi_2\\
\psi\xi_2&=-\xi_1\\
\psi\xi_3&=0
\end{align*}

Then we have $\hat{\phi}^2=-I+\eta_3 \otimes  \xi_3^*$ and $\hat{\phi}$ is a new f.pk-structure on $M$ of codimension one. We define an almost complex structure

$$
\hat{J}(X,Y)= \left(\hat{\phi} (X) - \lambda_3(Y) \xi_3^*, \psi (Y) + \eta_3(X) \xi_3 \right)
$$

with the usual convention that $(X,Y) \in \chi (M \times G)$ denotes, respectively, the $TM$- and $TG$-part of a vector field. Observe that $\left.\hat{J}\right|_{\g^*\oplus\g}\equiv \mathcal{J}$ above.

We are now ready to give the main definition of this section.

\begin{de}
An f.pk-$\g$-structure $(M,\phi, \xi_1^*, \xi_2^*, \xi_3^*)$ is \emph{mixed normal} if and only if $\hat{J}$ is integrable.
\end{de}

The word "mixed" alludes to the structure $\mathcal{J}$ mixing the two copies of $\g$ -- a single direction is interchanged between them, while each copy contains a non-trivial complex subspace. Our main theorem is the characterisation of this condition.

\begin{thm}\label{Heisenberg}
The almost complex structure $\hat{J}$ is integrable if and only if 

$$
\begin{array}{lr}
[\hat{\phi},\hat{\phi}]+d\eta_3\otimes \xi_3^*=0 & (\ast\ast)
\end{array}
$$

\end{thm}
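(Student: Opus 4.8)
The strategy mirrors the proof of Theorem \ref{normal}: I compute the Nijenhuis tensor $[\hat{J},\hat{J}]$ on pairs drawn from the three distinguished types of vector fields on $M\times G$, namely $(\xi_i^*,0)$, $(X,0)$ for $X$ valued in $D$, and $(0,\xi_i)$, and I show that the vanishing of every component is equivalent to the single tensorial condition $(\ast\ast)$. Since $\hat{\phi}$ is itself an f.pk-structure (of codimension one, with $\hat{\phi}^2=-I+\eta_3\otimes\xi_3^*$) and $\hat{J}$ restricts to $\mathcal{J}$ on $\g^*\oplus\g$, the calculation will split naturally into a "contact-type" part governed by the single direction $\xi_3^*$ and a part confined to $\g^*\oplus\g$ already handled by Theorem \ref{dobre} and Remark \ref{redu}.

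First I would dispose of the purely $TG$- and mixed $TG$/$\g^*$ brackets. On pairs $(0,\xi_i),(0,\xi_j)$ and $(\xi_i^*,0),(0,\xi_j)$ the Nijenhuis tensor reduces, exactly as in equations $(I)$--$(III)$ but now with $\psi$ and $\lambda_3$ in place of the naive transport, to expressions comparing $\psi[\xi_i,\xi_j]$ and $[\psi\xi_i,\psi\xi_j]$ against their $\g^*$-counterparts; by Remark \ref{redu} these vanish precisely under the integrability of $\mathcal{J}$, which we have assumed is built into the construction, so they impose no new condition on $\phi$. The genuinely informative cases are the four involving a $D$-valued field $X$: the brackets $[(X,0),(Y,0)]$, $[(X,0),(\xi_i^*,0)]$, and $[(X,0),(0,\xi_i)]$. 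Here I expand $[\hat{J},\hat{J}]$ using $\hat{\phi}^2=-I+\eta_3\otimes\xi_3^*$ and the defining formula for $\hat{J}$, separating each result into its $TM$- and $TG$-components. The $TM$-component will assemble into $[\hat{\phi},\hat{\phi}](Z,T)+d\eta_3(Z,T)\,\xi_3^*$, while the $TG$-component collapses to a multiple of $\eta_3$-type terms that I must show vanishes as a consequence of $(\ast\ast)$.

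The decisive algebraic step — and the main obstacle — is the sufficiency direction: showing that $(\ast\ast)$ forces the second (that is, $TG$-valued) coordinates of all the mixed brackets to vanish. As in Theorem \ref{normal}, I expect to compose $(\ast\ast)$ with $\hat{\phi}$ and with the projection onto the $\xi_3^*$-direction (the analogue of $\eta^*$, here $\eta_3\otimes\xi_3^*$) to extract relations such as $\eta_3[\hat{\phi}X,\xi_i^*]=\eta_3[X,\hat{\phi}\xi_i^*]$ and the vanishing of $\eta_3[X,\xi_i^*]$, using $\eta_3\circ\hat{\phi}=0$ and $\hat{\phi}^2=-I+\eta_3\otimes\xi_3^*$. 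The subtlety absent from the earlier theorem is that $\hat{\phi}$ no longer kills all of $\g^*$: it rotates $\xi_1^*,\xi_2^*$ and annihilates only $\xi_3^*$, so the bookkeeping of which terms land in $\mathrm{span}\{\xi_1^*,\xi_2^*\}$ versus $\mathrm{span}\{\xi_3^*\}$ requires care, and the cross-terms must be shown to cancel against the structure constants permitted by the classification in Remark \ref{lista}.

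For the converse (necessity), I would read off from the $TM$-components computed above that integrability of $\hat{J}$ immediately yields $(\ast\ast)$ evaluated on pairs from $D$, on pairs $(X,\xi_i^*)$, and on $(\xi_i^*,\xi_j^*)$ — the last using that $\hat{\phi}$ restricted to $\g^*$ squares to $-I+\eta_3\otimes\xi_3^*$ and that the $\g^*\oplus\g$ block is already integrable. Since a $(1,2)$-tensor is determined by its values on a spanning set of pairs, and $D\oplus\mathrm{span}\{\xi_1^*,\xi_2^*,\xi_3^*\}=TM$, assembling these partial identities gives the full tensorial equation $(\ast\ast)$, completing the equivalence.
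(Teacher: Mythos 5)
Your overall strategy is the same as the paper's (a case-by-case Nijenhuis computation relative to a splitting of $T(M\times G)$, with necessity read off the $TM$-components and sufficiency extracted by composing $(\ast\ast)$ with $\hat{\phi}$ and $\eta_3$), but there is one genuine gap in the sufficiency direction: you dispose of all the brackets confined to $\g^*\oplus\g$ -- the pairs $(0,\xi_i),(0,\xi_j)$, the pairs $(\xi_i^*,0),(0,\xi_j)$, and implicitly the pairs $(\xi_i^*,0),(\xi_j^*,0)$ -- by invoking ``the integrability of $\mathcal{J}$, which we have assumed is built into the construction.'' That integrability is \emph{not} an assumption of Theorem \ref{Heisenberg}: $\hat{J}$ (and hence $\mathcal{J}$) is defined for an arbitrary parallelism, and Theorem \ref{dobre} shows that $\mathcal{J}$ is integrable only for special choices of $\xi_1^*,\xi_2^*,\xi_3^*$. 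In the sufficiency direction you are given only $(\ast\ast)$, so you must \emph{derive} the integrability of $\mathcal{J}$ from it; assuming it makes the argument circular (and would, if taken as a hypothesis, prove a strictly weaker theorem).

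The repair is short and is exactly what the paper does: the pairs $(\xi_1^*,\xi_3^*)$ and $(\xi_2^*,\xi_3^*)$ belong to $\hat{D}\times\mathrm{span}\{\xi_3^*\}$, so the mixed case $[\hat{J},\hat{J}]\left[(X,0),(\xi_3^*,0)\right]$ with $X=\xi_1^*,\xi_2^*$ already shows, under $(\ast\ast)$, that $[\hat{J},\hat{J}]\left[(\xi_3^*,0),(\xi_1^*,0)\right]=[\hat{J},\hat{J}]\left[(\xi_3^*,0),(\xi_2^*,0)\right]=0$; by Remark \ref{redu} this is precisely equivalent to the integrability of $\mathcal{J}$, which then kills all remaining brackets inside $\g^*\oplus\g$. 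You should insert this step explicitly. A second, smaller point: your treatment of $[(X,0),(0,\xi_i)]$ gestures at ``cancellation against the structure constants permitted by Remark \ref{lista},'' which is not needed -- for $i=1,2$ these terms vanish simply because $\lambda_3(\xi_1)=\lambda_3(\xi_2)=0$, and for $i=3$ the computation reduces to the same identities $\hat{\phi}[X,\xi_3^*]=[\hat{\phi}X,\xi_3^*]$ and $\eta_3[X,\xi_3^*]=0$ that you extract from $(\ast\ast)$ as in Theorem \ref{normal}. With these two corrections your proof matches the paper's.
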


\begin{proof}

As in Theorem \ref{normal} above, it is sufficient to examine the Nijenhuis tensor in five cases, this time coming from the splitting 

$$
T(M \times G) = D \oplus span \{\xi_1^*, \xi_2^* \} \oplus span \{\xi_3^* \} \oplus span \{ \xi_1, \xi_2, \xi_3 \} = \hat D \oplus span \{\xi_3^* \} \oplus span \{ \xi_1, \xi_2, \xi_3 \}
$$

suitable for a new f.pk-structure of codimension one. They are

\setcounter{equation}{0}

\begin{align}
\begin{aligned}
[\hat{J},\hat{J}] & \left[(\xi_3^*,0),(0,\xi_i) \right] \\
\end{aligned}
\end{align}

\begin{align}
\begin{aligned}
[\hat{J},\hat{J}] & \left[(0,\xi_i),(0,\xi_j) \right] \\
\end{aligned}
\end{align}

\begin{align}
\begin{aligned}
[\hat{J},\hat{J}] & \left[(X,0),(Y,0) \right]=-\left([X,Y],0 \right)+ \left[(\hat{\phi} X,0),(\hat{\phi} Y, 0) \right]- \hat{J} \left([\hat{\phi} X,Y],0 \right)- \hat{J} \left([X,\hat{\phi} Y],0 \right)\\
&=\left(-[X,Y],0 \right)+ \left([\hat{\phi} X, \hat{\phi} Y],0 \right)- \left(\hat{\phi} [\hat{\phi} X,Y], \eta_3 [\hat{\phi} X,Y] \xi_3 \right)-\left(\hat{\phi} [X,\hat{\phi} Y],\eta_3 [X,\hat{\phi} Y] \xi_3 \right)\\
&=\left([\hat{\phi},\hat{\phi}][X,Y]-  \eta_3 [X,Y] \xi_3^*,- \eta_3 \left([\hat{\phi} X,Y]+[X,\hat{\phi} Y] \right) \xi_3 \right)
\end{aligned}
\end{align}

\begin{align}
\begin{aligned}
[\hat{J},\hat{J}] & \left[(X,0),(0,\xi_i) \right]= \left[(\hat{\phi} X,0),(- \lambda_3 \xi_i \xi_3^*,\psi \xi_i ) \right]-\hat{J} \left[(\hat{\phi} X,0),(0,\xi_i) \right]-\hat{J} \left[(X,0),(-\lambda_3 \xi_i  \xi_3^*, \psi \xi_i ) \right] \\
&=\left(-[\hat{\phi} X, \lambda_3 \xi_i \xi_3^*],0 \right)+ \hat{J} \left([X, \lambda_3 \xi_i \xi_3^*],0 \right) \\
&= \left(-[\hat{\phi} X, \lambda_3 \xi_i \xi_3^*],0 \right)+ \left(\hat{\phi} [X, \lambda _3 \xi_i \xi_3^*], \eta_3 [X,\lambda_3 \xi_i \xi_3^*] \xi_3 \right) \\
&= \left(\hat{\phi} [X,\lambda_3 \xi_i \xi_3^*]-[\hat{\phi} X, \lambda_3 \xi_i \xi_3^*], \eta_3 [X,\lambda_3 \xi_i \xi_3^*] \xi_3 \right)
\end{aligned}
\end{align}

\begin{align}
\begin{aligned}
[\hat{J},\hat{J}] & \left[(X,0),(\xi_3^*,0) \right]\\
&=- \left([X,\xi_3^*],0 \right)+ \left[(\hat{\phi} X,0),(0,\xi_3) \right]-\hat{J} \left[(\hat{\phi} X,0),(\xi_3^*,0) \right]- \hat{J} \left[(X,0),(0, \xi_3) \right] \\
&=\left(-[X,\xi_i^*],0 \right)- \left(\hat{\phi} [\hat{\phi} X, \xi_3^* ], \eta_3 [\phi X,\xi_3^*] \xi_3 \right)- \left(\hat{\phi} [X,\hat{\phi} \xi_3^*], \eta_3 [X,\hat{\phi} \xi_3^*] \xi_3 \right) \\
&=\left([\hat{\phi},\hat{\phi}](X,\xi_3^*)-  \eta_3 [X,\xi_3^*]  \xi_3^*,- \eta_3 \left([\phi X,\xi_3^*]+[X,\phi \xi_3^*] \right) \xi_3 \right) 
\end{aligned}
\end{align}

where $X,Y$ have values in $\hat{D}$. We see that the condition $(\ast\ast)$ is necessary for the expressions $(I)$-$(V)$ to vanish since we can recover the tensor $[\hat{\phi},\hat{\phi}]+d\eta_3\otimes \xi_3^*$ from the first coordinates of $(III)$ and $(V)$.

\begin{rmk} \label{cond}
For future reference, we stress that $(\ast\ast)$ is trivially satisfied on $\g^*$ if $\mathcal{J}$ is integrable.
\end{rmk}

Proving the condition sufficient follows the same steps as for Theorem \ref{normal}, except the following cases. In $(IV)$ $[\hat{J},\hat{J}]\left((X,0),(0,\xi_1)\right)$ and $[\hat{J},\hat{J}]\left((X,0),(0,\xi_2)\right)$ require separate treatment -- these vanish because $\lambda_3(\xi_1)=\lambda_3(\xi_2)=0$. Finally, we know that $[\hat{J},\hat{J}]  \left[(\xi_3^*,0),(\xi_1^*,0) \right]=[\hat{J},\hat{J}]  \left[(\xi_3^*,0),(\xi_2^*,0) \right]=0$ and that, by Remark \ref{redu} gives integrability of $\mathcal{J}$ and consequently vanishing of $(I)$ and $(II)$.
\end{proof}

If the parallelism is not given by a group acting on the manifold, we choose the simply connected $G$ for definiteness only. However, if some group does act -- as in the following constructions -- we will use this group instead. This does not change the theorem or proof in any way.

We stress that although we have formulated the mixed normality condition using $M\times G$ due to its roots in Sasakian geometry, since the tensor calculus is local, the same proof works in the two following locally trivial cases. They will serve as important examples below and seem to be well suited for physical applications.

Let $G^3\hookrightarrow M^{2n+3}\longrightarrow W$ be a principal bundle given by a cocycle $\{\Gamma_{\alpha\beta}\}$. Let $(M,\phi,\xi_1^*,\xi_2^*,\xi_3^*)$ be an f.pk-$\g$-structure, with $^*$ coming from the action of $G$ on $M$. We can construct a new principal $G\times G$-bundle over $W$ using the cocycle $\{(\Gamma_{\alpha\beta},\Gamma_{\alpha\beta})\}$. We call this bundle $M\oplus G$ in contrast with $M\times G$ before. The $G\times G$ action gives six global vector fields, $\{\xi_1^*,\xi_2^*,\xi_3^*,\xi_1,\xi_2,\xi_3\}$. By local triviality -- and since each distribution is preserved by the new cocycle -- we can still write the decomposition $T\left(M\oplus G\right)=D \oplus span\{\xi_i^*\} \oplus span\{ \xi_i\}$.  Define $\hat{\phi}$, $\psi$, $\hat{J}$ on $M\oplus G$ in an analogous fashion as before. The theorem follows from the proof of Theorem \ref{normal}.

\begin{thm}\label{bundle}
The almost complex manifold $\left(M\oplus G, \hat{J} \right)$ is complex if and only if the condition $(\ast\ast)$ holds.
\end{thm}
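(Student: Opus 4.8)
The plan is to deduce Theorem \ref{bundle} from the local computation already carried out for Theorem \ref{Heisenberg}, exploiting that integrability is a pointwise condition. By the Newlander--Nirenberg theorem $\hat{J}$ is integrable if and only if the Nijenhuis tensor $[\hat{J},\hat{J}]$ vanishes, and since $[\hat{J},\hat{J}]$ is a $(1,2)$-tensor it vanishes globally on $M\oplus G$ precisely when it vanishes at every point, where it may be evaluated in any local frame. I would therefore fix a point, pass to a trivializing neighborhood, and reduce the statement to checking that in a suitable local frame the five case computations $(I)$--$(V)$ reproduce those of Theorem \ref{Heisenberg} verbatim.

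Concretely, by local triviality $M\oplus G$ is modeled on $U\times G\times G$ over a chart $U\subset W$, and the first projection $\pi_1\colon M\oplus G\to M$ exhibits the second copy of $G$ as its fiber. In such a chart I would take the frame adapted to $T(M\oplus G)=D\oplus span\{\xi_i^*\}\oplus span\{\xi_i\}$: the fields $\xi_i^*,\xi_i$ as the fundamental fields of the two $G$-factors and local sections $X$ of $D$ pulled back from $M$, hence constant along the second fiber. The heart of the matter is the resulting bracket table. Because the defining cocycle is diagonal, $span\{\xi_i\}$ is exactly the $\pi_1$-vertical bundle while $D\oplus span\{\xi_i^*\}$ lifts as a distribution constant along that fiber; consequently the cross-brackets satisfy $[X,\xi_i]=0$ and $[\xi_i^*,\xi_j]=0$, whereas $[X,Y]$ and $[X,\xi_j^*]$ reproduce the brackets on $M$ and $[\xi_i^*,\xi_j^*]$, $[\xi_i,\xi_j]$ reproduce the two copies of the structure constants of $\g$. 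These are precisely the relations underlying the $M\times G$ computation.

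With the bracket table matched and $\hat{J}$ given by the same formula, the cases transcribe directly. Cases $(III)$ and $(V)$ involve only $\hat{D}$ and $\xi_3^*$, i.e. the $M$-side, so they are unchanged and reassemble the tensor $[\hat{\phi},\hat{\phi}]+d\eta_3\otimes\xi_3^*$ in their first coordinate; the cases genuinely touching the second copy of $\g$ -- namely $(I)$, $(II)$ and the $\xi_1,\xi_2$ sub-cases of $(IV)$ -- are exactly those relying on the vanishing cross-brackets established above, together with $\lambda_3(\xi_1)=\lambda_3(\xi_2)=0$. Since $\left.\hat{J}\right|_{\g^*\oplus\g}\equiv\mathcal{J}$, cases $(I)$ and $(II)$ reduce to the integrability of $\mathcal{J}$, which follows from Remark \ref{redu} once the $\g^*$-part of $(\ast\ast)$ forces $[\hat{J},\hat{J}]\left[(\xi_3^*,0),(\xi_1^*,0)\right]=[\hat{J},\hat{J}]\left[(\xi_3^*,0),(\xi_2^*,0)\right]=0$. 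The sufficiency direction then copies the $\eta_3$-composition manipulations from the proofs of Theorems \ref{normal} and \ref{Heisenberg} without change, and because $(\ast\ast)$ is a global tensorial identity on $M$ its chart-wise validity is equivalent to its global validity.

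The step I expect to require the most care is the bracket table of the second paragraph -- specifically verifying that the diagonal twisting does not spoil the vanishing of the cross-brackets $[X,\xi_i]$ and $[\xi_i^*,\xi_j]$ on which the product computation rests, that is, that after gluing the second copy of $G$ still sits as a flat, mutually commuting direction. I would settle this by identifying $span\{\xi_i\}$ with the vertical bundle of $\pi_1$ and lifting $D\oplus span\{\xi_i^*\}$ so as to be constant along that fiber, the diagonal cocycle $\{(\Gamma_{\alpha\beta},\Gamma_{\alpha\beta})\}$ guaranteeing that this lift is globally well defined; the remainder is a faithful transcription of the proof of Theorem \ref{Heisenberg}.
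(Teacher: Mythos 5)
Your proposal takes the same route as the paper, whose entire argument for Theorem \ref{bundle} is the preceding remark that the tensor calculus is local and that ``each distribution is preserved by the new cocycle'', so that the case analysis of Theorem \ref{Heisenberg} transfers verbatim. To your credit, you have isolated exactly the point on which this transfer hinges: the bracket table, i.e.\ the vanishing of the cross-brackets $[X,\xi_i]$ and $[\xi_i^*,\xi_j]$ that the $M\times G$ computation uses silently in cases $(I)$, $(II)$ and $(IV)$.

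The gap is in your resolution of that point. In a trivialization $U_\alpha\times G\times G$ the diagonal transition $(x,g,h)\mapsto\left(x,\Gamma_{\alpha\beta}(x)g,\Gamma_{\alpha\beta}(x)h\right)$ does preserve the two vertical distributions $span\{\xi_i^*\}$ and $span\{\xi_i\}$ (so $[\xi_i^*,\xi_j]=0$ is genuinely fine, these being fundamental fields of the two commuting factors of the $G\times G$-action), but it does \emph{not} preserve the naive lift $D\times\{0\}$: a vector $(v,w,0)$ with $(v,w)\in D$ acquires the third component $dR_h\left(d\Gamma_{\alpha\beta}(v)\right)$, which is nonzero unless $d\Gamma_{\alpha\beta}$ annihilates $d\pi(D)=TW$. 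Hence ``constant along the second fiber'' is not a well-defined lift of $D$, and the diagonal cocycle does not guarantee it. The canonical repair is to realize $M\oplus G$ as the fibre product $M\times_W M$ and lift $u\in D_m$ at $(m,m')$ to $\left(u,h_{m'}(d\pi u)\right)$ via the $D$-horizontal lift in the second factor; but with this lift the cross-bracket $[X,\xi_j]$ equals the vertical part of $[X,\xi_j^*]$ transported to the second factor, and this need not vanish -- nor is it forced to vanish by $(\ast\ast)$, which only controls the $\eta_3$-component of such brackets. It does vanish when $D$ is $G$-invariant (a principal connection), which is precisely the situation of the $F\Sigma$ example where the paper explicitly invokes $[X,\xi^*]=0$ for horizontal $X$ and vertical $\xi^*$. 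So either that invariance must be assumed, or the extra bracket terms appearing in cases $(I)$, $(II)$ and $(IV)$ must be computed and shown to cancel; your proposal, like the paper, supplies neither, so the ``faithful transcription'' in your last step is not yet justified.
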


Now suppose that a compact group $G^3$ acts locally freely on $M^{2n+3}$ and suppose that $(M,\phi,\xi_1^*,\xi_2^*,\xi_3^*)$ is an f.pk-$\g$-structure with $^*$ coming from the action. Then $M$ is again a locally trivial bundle, but this time it is an orbifold bundle over some orbifold $W$. Given its defining cocycle $\{ \Gamma_{\alpha\beta}\}$, we again construct a new orbifold bundle over $W$, with fiber $G\times G$ using the cocycle $\{(\Gamma_{\alpha\beta},\Gamma_{\alpha\beta})\}$. We call the total space $M\rtimes G$, and stress that it is again a manifold. Its tangent bundle splits as before, into $D \oplus span \{ \xi_i^* \} \oplus span \{ \xi_i \}$.  Adjusting the definitions once more, we get tensors $\hat{\phi}$, $\psi$, $\hat{J}$ on $M\rtimes G$, and the theorem characterising the integrability of $\hat{J}$.

\begin{thm}\label{bundlee}
The almost complex manifold $\left(M\rtimes G, \hat{J} \right)$ is complex if and only if the condition $(\ast\ast)$ holds.
\end{thm}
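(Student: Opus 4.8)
The plan is to reduce the claim to the purely local Nijenhuis computation already performed in the proof of Theorem~\ref{Heisenberg}. By the Newlander--Nirenberg theorem, $\hat{J}$ makes $M\rtimes G$ a complex manifold precisely when its Nijenhuis tensor $[\hat{J},\hat{J}]$ vanishes identically. The decisive point is that $[\hat{J},\hat{J}]$ is a genuine $(1,2)$-tensor, so its value at a point depends only on the values of its arguments there; hence its vanishing may be tested in any local trivialisation. Since $(\ast\ast)$ is itself a tensorial identity on $M$ assembled from $\hat{\phi}$, $\eta_3$ and $\xi_3^*$, both sides of the asserted equivalence are local in character, and it suffices to verify the equivalence in a single chart.

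First I would fix a point of the base orbifold $W$ and pass to a local uniformising system, over which $M\rtimes G$ is modelled -- after dividing by the finite isotropy group -- on a product $\tilde{U}\times(G\times G)$. Over such a chart the six global fields $\xi_1^*,\xi_2^*,\xi_3^*,\xi_1,\xi_2,\xi_3$ restrict to the fundamental fields of the two $G$-factors, they span the same summands of the splitting $T(M\rtimes G)=D\oplus span\{\xi_i^*\}\oplus span\{\xi_i\}$, and they obey the same bracket relations as on $M\times G$; in particular $\left.\hat{J}\right|_{\g^*\oplus\g}\equiv\mathcal{J}$ still holds. Consequently $\hat{\phi}$, $\psi$ and $\hat{J}$ acquire exactly the coordinate expressions they have in the $M\times G$ setting, and the reduction to the five cases $(I)$--$(V)$ proceeds verbatim.

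With the local model in place, the transfer of the computation is immediate. Each of $(I)$--$(V)$ is a purely algebraic expression in the local fields, $\hat{\phi}$, $\psi$ and their Lie brackets, all of which coincide with their $M\times G$ counterparts. Thus the first coordinates of $(III)$ and $(V)$ again recover $[\hat{\phi},\hat{\phi}]+d\eta_3\otimes\xi_3^*$, giving necessity of $(\ast\ast)$; and for sufficiency I would repeat the argument of Theorem~\ref{Heisenberg}, using $\lambda_3(\xi_1)=\lambda_3(\xi_2)=0$ to kill the relevant subcases of $(IV)$, and the vanishing of $[\hat{J},\hat{J}]$ on the pairs $(\xi_3^*,\xi_1^*)$ and $(\xi_3^*,\xi_2^*)$ together with Remark~\ref{redu} to obtain integrability of $\mathcal{J}$ and hence the vanishing of $(I)$ and $(II)$.

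The hard part will not be the Nijenhuis calculation but the bookkeeping that legitimises the passage to the local product. I must check that the cocycle $\{(\Gamma_{\alpha\beta},\Gamma_{\alpha\beta})\}$ preserves each of the three summands of the tangent splitting and carries the local forms of $\hat{\phi}$, $\psi$, $\hat{J}$ into one another, so that these patch to globally defined tensors; here using the \emph{same} cocycle on both $G$-factors and the $G$-invariance of the underlying data makes the transition maps act compatibly. I must also confirm that local freeness together with compactness of $G$ renders $M\rtimes G$ a genuine manifold carrying the stated splitting, so that the singular strata of $W$ cause no difficulty once one works on the uniformising charts. Granting these points, the equivalence follows from the local model and the already-established Theorem~\ref{Heisenberg}, exactly as Theorem~\ref{bundle} did in the principal-bundle case.
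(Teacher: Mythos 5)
Your proposal is correct and follows essentially the same route as the paper, which likewise disposes of the $M\rtimes G$ case by observing that the tensor calculus is local, that the cocycle $\{(\Gamma_{\alpha\beta},\Gamma_{\alpha\beta})\}$ preserves the splitting $D\oplus span\{\xi_i^*\}\oplus span\{\xi_i\}$, and that the Nijenhuis computation of Theorem~\ref{Heisenberg} therefore transfers verbatim to the local product model. Your treatment is in fact more explicit than the paper's about the uniformising charts and the patching of $\hat{\phi}$, $\psi$, $\hat{J}$, but the underlying argument is the same.
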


From the three theorems of this section, Theorem \ref{normal} and definitions we can formulate the following theorem which describes connections between normal and mixed normal structures.

\begin{thm} \label{normalnormal}
For a manifold $(M,\phi,\xi_1^*,\xi_2^*,\xi_3^*)$ with an f.pk-$\g$-structure, the following conditions are equivalent:

\begin{enumerate}
\item $[\hat{\phi},\hat{\phi}]+ d\eta_3 \otimes \xi_3^*=0$;
\item $\left(M,\phi,\xi_i^*\right)$ is a mixed normal f.pk-$\g$-structure;
\item $\left (M,\hat{\phi},\xi_3^*\right)$ is a normal f.pk-structure;
\item the almost complex structure $J$ on $M\times\mathbb{R}$ is integrable;
\item the almost complex structure $\hat{J}$ on $M \times G$ is integrable;
\item if $M$ is a principal $G$-bundle with parallelism given by invariant fields, the almost complex structure $\hat{J}$ on $M \oplus G$ is integrable;
\item if a compact Lie group $G$ acts locally freely on $M$ and parallelism is given by invariant fields, the almost complex structure $\hat{J}$ on $M \rtimes G$ is integrable.
\end{enumerate}

\end{thm}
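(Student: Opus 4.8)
The plan is to funnel every equivalence through condition (1), which is literally the tensor equation $(\ast\ast)$, and to harvest the work already done in the preceding theorems rather than recompute any Nijenhuis bracket. Concretely, I would first dispatch the three product-type constructions: Theorem \ref{Heisenberg} gives (5)$\Leftrightarrow$(1), Theorem \ref{bundle} gives (6)$\Leftrightarrow$(1), and Theorem \ref{bundlee} gives (7)$\Leftrightarrow$(1), each under its stated bundle/action hypotheses. The definition of mixed normality reads off (2)$\Leftrightarrow$(5) with no further argument. This already binds $(1),(2),(5),(6),(7)$ together, leaving only the two ``classical'' reformulations $(3)$ and $(4)$ to be attached.

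The one step demanding a genuine observation, and which I expect to be the main (if modest) obstacle, is identifying $(3)$ and $(4)$ with $(1)$. Here I would first confirm that $\hat{\phi}$ really is a codimension-one f.pk-structure: from $\hat{\phi}^2=-I+\eta_3\otimes\xi_3^*$ (recorded just after its definition) one gets $\hat{\phi}^3+\hat{\phi}=0$ and $\ker\hat{\phi}=\mathrm{span}\{\xi_3^*\}$, so $(M,\hat{\phi},\xi_3^*)$ is an f.pk-$\g$-structure whose parallelism spans the one-dimensional abelian algebra $\mathbb{R}$. Theorem \ref{normal} then applies with $p=1$: its normality condition $[\phi,\phi]+d\eta^*=0$ specializes, for $\g=\mathbb{R}$ and $\eta^*=\eta_3\otimes\xi_3^*$, to exactly $[\hat{\phi},\hat{\phi}]+d\eta_3\otimes\xi_3^*=0$, i.e.\ to $(\ast\ast)$. (The trailing assertion of Theorem \ref{normal} that $\g$ must be abelian is vacuous here, $\mathbb{R}$ being abelian, so no contradiction is provoked.) This yields $(3)\Leftrightarrow(1)$. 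Finally, being \emph{normal} for the codimension-one structure $(M,\hat{\phi},\xi_3^*)$ means, by the definition recalled in Section 2 with $G=\mathbb{R}$, precisely that the associated $J$ on $M\times\mathbb{R}$ is integrable, which is statement $(4)$; hence $(3)\Leftrightarrow(4)$.

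Assembling these, each of $(2)$–$(7)$ has been shown equivalent to $(1)$, so all seven conditions are equivalent. The only care needed throughout is bookkeeping: checking that the one-dimensional specialization of Theorem \ref{normal} reproduces $(\ast\ast)$ on the nose, and reading $(6)$ and $(7)$ as \emph{conditional} equivalences — they assert integrability of $\hat{J}$ on $M\oplus G$ respectively $M\rtimes G$ only under the bundle/action hypotheses, and under those hypotheses Theorems \ref{bundle} and \ref{bundlee} deliver the equivalence with $(\ast\ast)$ directly.
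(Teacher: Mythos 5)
Your proposal is correct and matches the paper's (unwritten-out) argument exactly: the paper simply asserts that Theorem \ref{normalnormal} follows ``from the three theorems of this section, Theorem \ref{normal} and definitions,'' and your reduction of every item to the tensor condition $(\ast\ast)$ --- including the key observation that Theorem \ref{normal} applied to the codimension-one structure $(M,\hat{\phi},\xi_3^*)$ with $\g=\mathbb{R}$ specializes to $[\hat{\phi},\hat{\phi}]+d\eta_3\otimes\xi_3^*=0$ --- is precisely the intended chain of equivalences. No gaps.
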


\section{Examples}

We have made some assumptions about the structure tensor $\phi$ on a $\g$-manifold. We would now like to give some examples to show that they happen to be fulfilled. 

\begin{ex} The lowest possible dimension is three. Let $M=G^3$ -- a group (not necasarry simply connected) with a Lie algebra $\g$ as in Remark \ref{lista}. Take a trivial f.pk-$\g$-structure $\phi\equiv 0$ and any parallelism satisfying the conditions in Theorem \ref{dobre}. Then $\hat{J}$ is the integrable left invariant complex structure on $G \times G$. We include the list of possible groups $G$ here, for reference

\begin{itemize}
\item the abelian group $\mathbb{R}^3$;
\item the group of invertible upper-triangular $2\times 2$ matrices with $\g=\mathfrak{t}(2)$, all upper-triangular $2\times 2$ matrices;
\item the Heisenberg group $H_3$ with $\g=\hatrzy$;
\item the 2-dimensional Poincar\'e group $P(1,1)$ with $\g=\mathfrak{p}(1,1)$ (sometimes denoted $\mathfrak{iso}(1,1)$);
\item affine isometries of $\mathbb{R}^2$ with $\g=\mathfrak{e}(2)$; 
\item the special linear group $SL(2,\mathbb{R})$ with $\g=\eseldwa$;
\item the orhogonal group $O(3)$ with $\g=\otrzy$;
\end{itemize}

We point out that 6 is the smallest dimension where the classification of Lie groups admitting left invariant complex structures is not yet known, cf. \cite{smol}. This example together with \cite{MY} gives a full picture for 6-dimensional product groups $G \times G$.  
\end{ex}

\begin{ex} \label{ex}
First non-trivial example is obtained in dimension $5$. Consider a Lie algebra 

$$
\mathfrak{g}=\left\{
\left[\begin{array}{ccccc} 
0 & 0 & p^* & s & r^* \\
0 & 0 & 0 & 0 & 0 \\
0 & 0 & 0 & 0 & q^* \\
0 & 0 & 0 & 0 & t \\
0 & 0 & 0 & 0 & 0
\end{array}\right] \quad | \quad p^*,q^*,r^*,s,t \in \mathbb{R} \right\}
$$

We will abuse notation slightly to write $p^*$ for the matrix with single 1 at the $p^*$-entry above and so on. The bracket structure is easily checked to be $[s,t]=r^*$, $[p^*,q^*]=r^*$ and 0 otherwise. Note that $\hatrzy= span \{p^*,q^*,r^* \}$ is embedded in $\g$. By the Lie algebra - Lie group correspondence we find a group $G$ with subgroup $H_3$. Thus $H_3$ acts freely on $G$ and we define an f.pk-$\hatrzy$-structure $(G, \phi, p^*,q^*,r^*)$ by

\begin{align*}
\phi s = t \\
\phi t = -s \\
\left.\phi\right|_{span\{p^*,q^*,r^* \}} &\equiv 0
\end{align*}

We check the condition $(\ast\ast)$. Since $D$ in this case is 2-dimensional parallelisable distribution we only need to compute

\begin{align*} 
[\hat{\phi},\hat{\phi}](s,t)+ d\eta_3[s,t] r^*=[t,-s]-r^*=0
\end{align*}

and

\begin{align*}
[\hat{\phi},\hat{\phi}](v,w)+ d\eta_3[v,w] r^*
\end{align*}

for any $v\in \{s,t \}$ and $w \in \{ p^*, q^*, r^*\}$ -- but then $[v,w]=0$ and the expressions vanish. This example, though not complicated, has the property that the distribution $D=im{}\phi$ is non-integrable, and so the normality does supply an additional geometric information.
\end{ex}

\begin{ex} This one is a non-example, in fact. In a similar vein, take the Lie algebra 

$$
\mathfrak{g}=\left\{
\left[\begin{array}{ccccc} 
s & 0 & p^* & 0 & r^* \\
0 & 0 & 0 & 0 & 0 \\
0 & 0 & 0 & 0 & q^* \\
0 & 0 & 0 & 0 & 0 \\
0 & 0 & 0 & 0 & t
\end{array}\right] \quad | \quad p^*,q^*,r^*,s,t \in \mathbb{R} \right\}.
$$

The bracket structure is $[s,r^*]=[r^*,t]=r^*$, $[p^*,q^*]=r^*$ and 0 otherwise. For this $\g$, we again find a group $G$ with a subgroup $H_3$ and define an f.pk-$\hatrzy$-structure $(G, \phi, p^*,q^*,r^*)$ as in Example \ref{ex}. The condition $(\ast\ast)$ is not satisfied because 

\begin{align*}
[\hat{\phi},\hat{\phi}](s,r^*)+ d\eta_3[s,r^*] r^*=-\eta_3 [s,r^*] r^*=-r^* \not = 0.
\end{align*}

The geometric reason for $(\ast\ast)$ to fail is that for a mixed normal structure, $r^*$ must preserve the distribution $\hat{D}$ -- which is not the case here.
\end{ex}

\begin{ex}
Consider an orientable, genus $g$ surface $\Sigma$. Any volume form $\omega$ gives rise to a $Sl(2,\mathbb{R})$ structure, and a $Sl(2,\mathbb{R})$-bundle of frames $F\Sigma$. This volume form also has a compatible almost complex structure $\mathbb{J}$, necessarily integrable, since the dimension is 2. As we already reduced the structure group of the bundle to $U(1)$, a maximal torus. We may assume without loss of generality that this $U(1)$ is the subgroup of rotations inside determinant 1 matrices, since all maximal tori are conjugate.

The tangent bundle $T\Sigma$ admits an affine connection with holonomy in $\mathfrak{u}(1)$, and this defines a connection $\nabla$ in the principal bundle $F\Sigma$. We will use that for any two vertical fields $X$ and $Y$ the horizontal part of $[X,Y]$ lies in $\mathfrak{u}(1)$.


We proceed to define an f.pk-structure. We will identify $\mathfrak{u}(1)$ in $\eseldwa$ with the algebra generated by $\xi_3^*=\left[\begin{array}{cc} 0 & 1 \\ -1 & 0 \end{array}\right]$. This vector is an appropriate choice for $\xi_3^*$ in Theorem \ref{dobre} -- we pick a Jordan basis of $[\xi_3^*,\cdot]$ as our parallelism $\{\xi_1^*,\xi_2^*,\xi_3^*\}$. We can now define $\phi$ on $F\Sigma$ by $\phi(X)=\left(\mathbb{J} d\pi X\right)^{\nabla}$, where $\pi$ is the projection in the bundle and $v^{\nabla}$ is the unique horizontal lift defined by the connection -- and $(F\Sigma,\phi,\xi_1^*,\xi_2^*,\xi_3^*)$ is an f.pk-$\eseldwa$-structure.

We will now prove that this structure is mixed normal. We define $\hat{\phi}$ and $\hat{J}$ on $F\Sigma\oplus Sl(2,\mathbb{R})$ as before, and we check the condition $(\ast\ast)$, first on the horizontal fields -- which we can take to satisfy $\phi X=Y$.

\begin{align*}
[\hat{\phi},\hat{\phi}](X,Y)+&d\eta_3\otimes \xi_3^*(X,Y)\\
&=\hat{\phi}^2[X,Y]+[\hat{\phi} X,\hat{\phi} Y]-\hat{\phi}[\hat{\phi} X,Y]-\hat{\phi}[X, \hat{\phi} Y]+\left(X\eta_3 Y-Y\eta_3X-\eta_3[X,Y]\right)\xi_3^*\\
&=\left(\mathbb{J}^2\pi[X,Y]\right)^{\nabla}+[Y,-X]-\left(\mathbb{J}\pi[\phi X,Y]\right)^{\nabla}-\left(\mathbb{J}\pi[ X,\phi Y]\right)^{\nabla}-\eta_3[X,Y]\xi_3^*\\
&=\left(\mathbb{J}^2\pi[X,Y]\right)^{\nabla}+[\pi Y,\pi-X]^{\nabla}+x\xi_3^*-\left(\mathbb{J}[\mathbb{J}\pi X,\pi Y]\right)^{\nabla}-\left(\mathbb{J}\pi[\pi X,\mathbb{J}\pi Y]\right)^{\nabla}-x\xi_3^*\\
&=\left([\mathbb{J},\mathbb{J}](\pi X,\pi Y)\right)^{\nabla} =0
\end{align*}

because $\mathbb{J}$ was integrable. As we mentioned before $(\ast\ast)$, is trivially satisfied on $\g^*$, so we are only left with the mixed horizontal/vertical pairs. But for each vertical $\xi^*$ and horizontal $X$, we have $[X,\xi^*]=0$ and so

\begin{align*}
[\hat{\phi},\hat{\phi}](X,\xi^*)+&d\eta_3\otimes \xi_3^*(X,\xi^*)\\
&=\hat{\phi}^2[X,\xi^*]+[\hat{\phi} X,\hat{\phi} \xi^*]-\hat{\phi}[\hat{\phi} X,\xi^*]-\hat{\phi}[X, \hat{\phi} \xi^*]+\left(X\eta_3 \xi^*-\xi^*\eta_3X-\eta_3[X,\xi^*]\right)\xi_3^*\\
&=-\eta_3[X,\xi_3^*]\xi_3^*=0
\end{align*}

This example also features a non-integrable distribution $D$ (only genus-one torus admits a flat connection). We point that $Sl(2,\mathbb{R})$ is isomorphic to $O(2,1)$ so we feel this example may be relevant to the study of (2+1)-dimensional gravity. Although mixed normality of $\left(F\Sigma,\phi,\xi_1^*,\xi_2^*,\xi^*_3\right)$ is encoded by either $F\Sigma\times G$ or $F\Sigma\oplus G$, the latter seems to be closer realted to overal geometry of $F\Sigma$ (or of $\Sigma$ itself).

\end{ex}


\section{References}


\begin{thebibliography}{10}

\bibitem{A} D. Alekseevsky, P. Michor,  \textit{Differential geometry of g-manifolds},  Differential Geom. Appl., \textbf{5}, 371 -- 403, 1995.

\bibitem{bl} D. E. Blair, \textit{Riemannian Geometry of Contact and Symplectic Manifolds}, Progress in Math. 203, Birkh\"auser, Boston, 2002.

\bibitem{blf} D. E. Blair, \textit{Geometry of manifolds with structural group $U(n)\times O(s)$}, J. Differential Geometry, \textbf{4}, 155-167, 1970.

\bibitem{bg-3sas} Ch.  Boyer,  K. Galicki, \textit{3-Sasakian manifolds}, in  Surveys in Differential Geometry: Essays on Einstein Manifolds, 123 -- 184, Surv. Differ. Geom., VI, Int. Press, Boston, MA, 1999.

\bibitem{bg} Ch. Boyer,  K. Galicki,  \textit{Sasakian Geometry}, Oxford Un. Press, 2008.

\bibitem{cap} B. Cappelletti-Montano, A. De Nicola, J. C. Marrero, I. Yudin, \textit{Examples of compact K-contact manifolds with no Sasakian metric}, Int. J. Geom. Methods Mod. Phys., \textbf{11}, 2014.

\bibitem{MY} A. Czarnecki, M. Sroka, \textit{6-dimensional product Lie algebras admitting integrable complex structures}, preprint, available from https://arxiv.org/abs/1610.01098.

\bibitem{dug} K. L. Duggal, S. Ianus, A. M. Pastore, \textit{Maps Interchanging f-Structures and their Harmonicity}, Acta Applicandae Mathematicae, \textbf{67}, 91 -- 115, 2001.

\bibitem{ha} B. Hajduk, A. Tralle, \textit{On simply connected K-contact non-Sasakian manifolds}, J. Fixed Point Theory Appl., \textbf{16}, 229 -– 241, 2014.

\bibitem{mil} R. S. Millman, \textit{f-structures with parallelizable kernel on manifolds}, J. Differential Geometry, \textbf{9}, 531 -- 535, 1974.

\bibitem{mo} V. Munoz, A. Tralle, \textit{Simply connected K-contact and Sasakian manifolds of dimension 7}, Mathematische Zeitschrift, \textbf{281}, 457 -– 470, 2015.

\bibitem{sa} S. Sasaki, Y. Hatakeyama, \textit{On differentiable manifolds with certain structures which are closely related to almost contact structures II}, Tohoku Math. J., \textbf{13}, 281 -- 294, 1961.

\bibitem{smol} N. K. Smolentsev, \textit{Complex, Symplectic and Contact Structures on Low-Dimensional Lie Groups}, Journal of Mathematical Sciences, \textbf{207}, 551 -- 613, 2015.

\bibitem{dkw} L. Di Terlizzi, J. Konderak, A.M. Pastore, R. Wolak, \textit{ K-structures and foliations},  Ann. Univ. Sci. Budapest. E\"otv\"os Sect. Math. 44, 171 -- 182, 2002.

\bibitem{wais} I. Vaisman, \textit{Dirac structures and generalized complex structures on $TM \times \mathbb{R}^n$}, Advances in Geometry, \textbf{7}, 453-474, 2007.
\end{thebibliography}
\end{document}